\definecolor{myblue}{rgb}{.8, .8, 1}
  \newcommand*\mybluebox[1]{%
    \colorbox{myblue}{\hspace{1em}#1\hspace{1em}}}
\newcommand{\lev}[1]{\ensuremath{\operatorname{lev}_{#1}}}
\newcommand{\bproj}[1]{\overleftarrow{\thinspace P\thinspace}_%
{\negthinspace\negthinspace #1}}
\newcommand{\scal}[2]{\left\langle{#1},{#2}  \right\rangle}
\newcommand{\menge}[2]{\big\{{#1}~\big |~{#2}\big\}}
\newcommand{\nnn}{\ensuremath{{n\in{\mathbb N}}}}
\newcommand{\Fix}{\ensuremath{\operatorname{Fix}}}
\newcommand{\Id}{\ensuremath{\operatorname{Id}}}
\theoremstyle{plain}{\theorembodyfont{\rmfamily}
\newtheorem{theorem}{Theorem}[section]}
\newtheorem{lemma}[theorem]{Lemma}
\newtheorem{proposition}[theorem]{Proposition}
\theoremstyle{plain}{\theorembodyfont{\rmfamily}
\newtheorem{definition}[theorem]{Definition}}
\theoremstyle{plain}{\theorembodyfont{\rmfamily}
}
\theoremstyle{plain}{\theorembodyfont{\rmfamily}
}
\theoremstyle{plain}{\theorembodyfont{\rmfamily}
\newtheorem{algorithm}[theorem]{Algorithm}}
\theoremstyle{plain}{\theorembodyfont{\rmfamily}
\newtheorem{example}[theorem]{Example}}
\theoremstyle{plain}{\theorembodyfont{\rmfamily}
\newtheorem{fact}[theorem]{Fact}}
\theoremstyle{plain}{\theorembodyfont{\rmfamily}
\newtheorem{remark}[theorem]{Remark}}
 \providecommand{\dom}{\operatorname{dom}}
\providecommand{\RR}{\mathbb{R}}
\providecommand{\RP}{{\mathbb{R}_+}}
\providecommand{\NN}{\mathbb{N}}
\newcommand{\sepp}{\setlength{\itemsep}{-3pt}}
\title{A Bregman projection method for approximating\\ fixed points of quasi-Bregman nonexpansive mappings}
\author{
Heinz H. Bauschke\thanks{Mathematics, Irving K. Barber School,
University of British Columbia,  Kelowna, B.C. V1V 1V7, Canada.
Email: {\tt heinz.bauschke@ubc.ca}. },~
Jiawei Chen\thanks{School of Mathematics and Statistics,
Southwest University, Chongqing 400715, China. Email: 
{\tt J.W.Chen713@163.com}. },
~
and~
Xianfu Wang\thanks{Mathematics, Irving K. Barber School, University of
British Columbia,  Kelowna, B.C. V1V 1V7, Canada. Email: 
{\tt shawn.wang@ubc.ca}.
} }
\date{September 24, 2013}
\begin{document}

\maketitle

\begin{center}
\emph{\large Dedicated to Boris Mordukhovich on the occasion of his
65th Birthday}

\bigskip

\bigskip

\end{center}

\begin{abstract}
\noindent
We introduce an abstract algorithm that aims to find
the Bregman projection onto a closed convex set. As an application,
the asymptotic behaviour of an iterative method for 
finding a fixed point of
 a quasi Bregman nonexpansive mapping with the 
 fixed-point closedness property is analyzed. 
We also show that our result is applicable to 
Bregman subgradient projectors.
\end{abstract}

\noindent {\bf Key words:}
Bregman projection, Bregman subgradient projector,
fixed point,
Legendre function,
quasi Bregman nonexpansive,
 Moreau envelope.

\noindent {\bf 2010 Mathematics Subject Classification:}
Primary 47H09; Secondary 52B55, 65K10, 90C25.
\section{Introduction}
Bregman distances provide a general and flexible framework for 
studying optimization problems both theoretically and
algorithmically \cite{BB96}--\cite{RS011}.
The objective of this paper is to present an iterative method for 
finding the Bregman projection onto a closed convex set. 
Our results extends those of \cite{BCW} from the Euclidean distance
to the Bregman distance setting. 

The paper is organized as follows. 
Section~\ref{s:assumption} contains useful 
auxiliary results on Bregman distances. 
In Section~\ref{s:iteration}, we introduce and analyze the iteration
scheme for finding the Bregman projection onto a closed convex set.
In Section~\ref{s:asymptotic}, we apply our iteration scheme to quasi 
Bregman nonexpansive
mappings that are fixed-point closed. The iterates are shown to 
converge to the fixed point which is the Bregman nearest point 
to the starting point; moreover, the total length
of the trajectory in terms of the Bregman distance
is finite. We conclude by pointing out that 
our theory applies to Bregman subgradient projectors. 

\section{Assumptions, notions and facts}\label{s:assumption}
\subsection{Standing assumptions}
 We assume throughout this paper that
\begin{empheq}[box=\mybluebox]{equation*}
\text{$C$ is a closed convex subset of a finite dimensional
Euclidean space $X$ }
\end{empheq}
with inner product $\scal{\cdot}{\cdot}$ and norm $\|\cdot\|$, 
and that
\begin{empheq}[box=\mybluebox]{equation*}
\text{$f\colon X\to \RR$ is strictly convex and differentiable,
with $\dom f^*$ open.}
\end{empheq}
We shall assume that the reader is familiar with basic convex
and variational analysis and its notation; see, e.g.,
\cite{Mord}, \cite{R70}, \cite{Zalines}, or \cite{BC11}. 
Our assumptions imply that $f$ is a Legendre function as hence is it
Fenchel conjugate $f^*$. 

\subsection{Bregman distance and projection}

\begin{definition}[Bregman distance]
\label{def:11}
{\rm (See \cite{B67}.)}
The function 
\begin{equation*}
D \colon X\times X \to \RP\colon 
(x,y)\mapsto f(x)-f(y)-\scal{\nabla f(y)}{x-y}
\end{equation*}
is called the \emph{Bregman distance} with respect to $f$.
\end{definition}
It is well known (see, e.g., \cite{BB97} or \cite{CZ}) that 
the Bregman distance allows nonorthogonal projections in our
setting: 
\begin{definition}[(left) Bregman projection]
For every $y \in X$, there exists a unique point
$\bproj{C}(y)$ in $C$, 
called the \emph{(left) Bregman projection} of $y$ onto $C$, 
such that 
$D(\bproj{C}(y),y) = \min_{c\in C}D(c,y)$. 
\end{definition}
Note that when 
$f=(1/2)\|\cdot\|^2$, then $\bproj{C}$ is the classical orthogonal projector.

\subsection{Useful facts}

The following results, which are mostly well known and which will
be useful later,
are recalled here for the reader's convenience. 

\begin{fact}\label{fact:13}
{\rm (See, e.g., \cite[Fact~2.3]{BC3}.)} 
For every $x \in X$,
the projection 
$\bproj{C}(x)$ 
 is characterized by
\begin{equation}\label{e:leftproj}
\bproj{C}(x)\in C\quad\text{and}\quad 
(\forall c\in C)\quad\langle \nabla f(x)-\nabla f(\bproj{C}(x))  , 
c-\bproj{C}(x) \rangle \leq 0;
\end{equation}
equivalently, by 
\begin{equation}\label{e:lprojchar}
\bproj{C}(x)\in C\quad\text{and}\quad 
(\forall c\in C)\quad D(c,x)\geq D(c,\bproj{C}(x)) +D_{f}(\bproj{C}(x),x).
\end{equation}
Moreover, $\bproj{C}\colon X\to C$ is continuous. 
\end{fact}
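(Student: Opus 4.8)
The plan is to treat $\bproj{C}(x)$ as the unique minimizer over $C$ of the function $\phi_x\colon c\mapsto D(c,x)=f(c)-f(x)-\scal{\nabla f(x)}{c-x}$, which for fixed $x$ is strictly convex and differentiable in $c$, with $\nabla\phi_x(c)=\nabla f(c)-\nabla f(x)$. First I would invoke the standard first-order variational characterization of a minimizer of a differentiable convex function over a convex set: a point $p\in C$ minimizes $\phi_x$ over $C$ if and only if $\scal{\nabla\phi_x(p)}{c-p}\geq0$ for all $c\in C$; substituting $\nabla\phi_x(p)=\nabla f(p)-\nabla f(x)$ turns this into exactly \eqref{e:leftproj}. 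Strict convexity of $\phi_x$ gives uniqueness of the minimizer, consistent with the definition of $\bproj{C}$.

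Next, to obtain the equivalence with \eqref{e:lprojchar}, I would establish the three-point identity
\[
D(c,x)=D(c,p)+D(p,x)+\scal{\nabla f(p)-\nabla f(x)}{c-p}\qquad(c,p,x\in X)
\]
by directly expanding the three Bregman distances; the $f(p)$ terms and the $\nabla f(p)$ cross terms cancel. Taking $p=\bproj{C}(x)$, the inequality in \eqref{e:leftproj} is equivalent to $\scal{\nabla f(p)-\nabla f(x)}{c-p}\geq0$ for all $c\in C$, which by the identity is equivalent to $D(c,x)\geq D(c,p)+D(p,x)$ for all $c\in C$, i.e.\ \eqref{e:lprojchar} (noting $D_{f}(\bproj{C}(x),x)=D(p,x)$). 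Conversely, feeding \eqref{e:lprojchar} back into the identity recovers \eqref{e:leftproj}, hence by the first step $p=\bproj{C}(x)$.

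For continuity, let $x_n\to x$ and set $p_n=\bproj{C}(x_n)$, $p=\bproj{C}(x)$. I would first show $(p_n)$ is bounded: fixing any $c_0\in C$, we have $D(p_n,x_n)\leq D(c_0,x_n)\to D(c_0,x)$, and since $f$ and $\nabla f$ are continuous on $X$ this forces $f(p_n)-\scal{\nabla f(x_n)}{p_n}$ to be bounded above. Because $f$ is Legendre, $v_n:=\nabla f(x_n)$ lies in $\intr\dom f^*$ and $v_n\to v:=\nabla f(x)\in\intr\dom f^*$; applying the Fenchel--Young inequality with the directions $v_n+\varepsilon\,p_n/\|p_n\|$ for a fixed small $\varepsilon>0$ (legitimate for large $n$ since $v$ is interior to $\dom f^*$ and $f^*$ is bounded on a small closed ball around $v$) yields $f(p_n)-\scal{v_n}{p_n}\geq\varepsilon\|p_n\|-M$, so $(p_n)$ is bounded. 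Then any convergent subsequence $p_{n_k}\to\bar p$ satisfies $\bar p\in C$ (as $C$ is closed), and passing to the limit in $D(p_{n_k},x_{n_k})\leq D(c,x_{n_k})$ gives $D(\bar p,x)\leq D(c,x)$ for every $c\in C$; by uniqueness $\bar p=\bproj{C}(x)$, so the whole sequence converges to $\bproj{C}(x)$.

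The first-order characterization and the algebraic identity are routine; the real work is the continuity argument, and within it the a priori boundedness of $(p_n)$. This is where the Legendre hypothesis is essential — equivalently, that $\nabla f(x)$ lies in the \emph{open} set $\dom f^*$ — since it is exactly what makes $c\mapsto f(c)-\scal{\nabla f(x)}{c}$ coercive and thereby prevents the projections from escaping to infinity; I expect this to be the main obstacle to write cleanly.
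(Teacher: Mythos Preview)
Your argument is correct. The paper does not actually supply a proof of this Fact: it merely records the statement and cites \cite[Fact~2.3]{BC3}, so there is no in-paper argument to compare against. Your self-contained derivation---first-order optimality for the strictly convex map $c\mapsto D(c,x)$ to obtain \eqref{e:leftproj}, the three-point identity to pass to \eqref{e:lprojchar}, and the subsequence/coercivity argument for continuity---is the standard route and is sound. In particular, your boundedness step via Fenchel--Young at the perturbed points $v_n+\varepsilon\,p_n/\|p_n\|$ correctly exploits the standing assumption that $\dom f^*$ is open, which guarantees $\nabla f(x)\in\intr\dom f^*$ and that $f^*$ is bounded on a small closed ball there; this is exactly the place where the Legendre hypothesis does its work, as you anticipated.
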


\begin{lemma}\label{le:16}
Let $(x_n)_\nnn$ be a sequence in $X$,
let $(y_n)_\nnn$ be a bounded sequence in $X$,
and suppose that $D(x_n,y_n)\to 0$.
Then $x_n-y_n\to 0$. 
\end{lemma}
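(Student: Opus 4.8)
The plan is to prove the contrapositive: assuming $x_n-y_n\not\to 0$, I will show that $D(x_n,y_n)\not\to 0$. Passing to a subsequence and relabelling, I may assume that there is some $\varepsilon>0$ with $\|x_n-y_n\|\geq\varepsilon$ for all $n$, so the goal becomes to show that $\liminf_n D(x_n,y_n)>0$ along a further subsequence.

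The key idea is to exploit the convexity of $D$ in its first argument in order to replace $(x_n)$ — which need not be bounded — by a bounded sequence. Since $f$ is convex and differentiable, the map $x\mapsto D(x,y_n)=f(x)-f(y_n)-\scal{\nabla f(y_n)}{x-y_n}$ is convex with value $0$ at $x=y_n$. So, setting $t_n:=\varepsilon/\|x_n-y_n\|\in(0,1]$ and $z_n:=y_n+t_n(x_n-y_n)$, I get
\begin{equation*}
0\leq D(z_n,y_n)\leq t_n D(x_n,y_n)+(1-t_n)D(y_n,y_n)=t_n D(x_n,y_n)\leq D(x_n,y_n),
\end{equation*}
while $\|z_n-y_n\|=\varepsilon$. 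Hence it suffices to establish the following reduced claim: if $(y_n)_\nnn$ is bounded and $\|z_n-y_n\|=\varepsilon$ for all $n$, then $\liminf_n D(z_n,y_n)>0$.

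For the reduced claim I would use compactness. The sequence $(z_n)_\nnn$ is bounded, since $(y_n)_\nnn$ is and $\|z_n-y_n\|=\varepsilon$; so along a subsequence $z_n\to z$ and $y_n\to y$ with $\|z-y\|=\varepsilon$, in particular $z\neq y$. Since $f$ is a differentiable convex function on the finite-dimensional space $X$, it is in fact $C^1$, so $D$ is continuous on $X\times X$ and $D(z_n,y_n)\to D(z,y)$ along this subsequence. Finally, strict convexity of $f$ forces $D(z,y)>0$ whenever $z\neq y$: the function $t\mapsto f(y+t(z-y))$ is then strictly convex, so its value at $t=1$ strictly exceeds the value at $t=1$ of its tangent line at $t=0$, which is exactly the inequality $D(z,y)>0$. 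Therefore $D(z_n,y_n)\to D(z,y)>0$ along that subsequence, contradicting $D(z_n,y_n)\to 0$; this proves the reduced claim and hence the lemma.

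I expect the only genuine subtlety to be the boundedness issue: one cannot simply pass to a convergent subsequence of $(x_n)_\nnn$ directly, because a priori $\|x_n-y_n\|$ may be unbounded even though $D(x_n,y_n)\to 0$. The convex-combination rescaling $z_n=y_n+t_n(x_n-y_n)$ is precisely the step that circumvents this, replacing an unbounded problem by one on a compact set; after that the conclusion rests only on continuity of $D$ and on the Bregman distance separating points, both of which are immediate consequences of the standing assumptions on $f$.
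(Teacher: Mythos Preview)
Your argument is correct. The rescaling $z_n=y_n+t_n(x_n-y_n)$ with $t_n=\varepsilon/\|x_n-y_n\|$ is exactly the right move: convexity of $D(\cdot,y_n)$ together with $D(y_n,y_n)=0$ gives $D(z_n,y_n)\le D(x_n,y_n)$, and now both sequences live in a compact set. The remaining steps --- automatic $C^1$-regularity of a differentiable convex function in finite dimensions, continuity of $D$, and $D(z,y)>0$ for $z\neq y$ from strict convexity --- are all valid under the paper's standing assumptions.

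The paper takes a different route: it simply invokes results of Butnariu and Resmerita on the modulus of total convexity (roughly, $\nu_f(y,t)=\inf\{D(x,y):\|x-y\|=t\}$), citing that in the present setting $f$ is uniformly totally convex on bounded sets, which immediately yields the conclusion. That approach is shorter on the page and plugs into a framework that also works in infinite-dimensional Banach spaces, at the cost of importing external machinery. Your proof is fully self-contained and elementary, but it leans on finite-dimensional compactness twice (for convergent subsequences and for $\nabla f$ being continuous), so it does not transfer directly beyond the finite-dimensional setting. In the context of this paper, where $X$ is explicitly finite-dimensional, your argument is a clean alternative.
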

\begin{proof}
Combine \cite[Remark~2.14]{BR} with \cite[Theorem~2.10]{BR}.
\end{proof}

\begin{lemma}
\label{le:19} 
Let $(x_n)_\nnn$ be a sequence in $X$.
Then the following are equivalent:
\begin{enumerate}
\item 
\label{le:19.1} 
$(x_n)_\nnn$ is bounded.
\item \label{le:19.3}  
$(D(x_{n},y))_\nnn$ is bounded for every $y\in X$. 
\item \label{le:19.2}  
There exists $y\in X$ such that 
$(D(x_{n},y))_\nnn$ is bounded.
\end{enumerate}
\end{lemma}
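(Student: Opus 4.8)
The plan is to prove the chain of implications $\ref{le:19.1}\Rightarrow\ref{le:19.3}\Rightarrow\ref{le:19.2}\Rightarrow\ref{le:19.1}$. The first two links are soft; essentially all the content lies in deducing boundedness of $(x_n)_\nnn$ from boundedness of a single sequence of Bregman distances.

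First I would record that $f$, being a finite convex function on the finite-dimensional space $X$, is continuous, and that $\nabla f(y)$ is defined for every $y\in X$. For $\ref{le:19.1}\Rightarrow\ref{le:19.3}$, if $(x_n)_\nnn$ is bounded it lies in some closed ball $B$, on which the continuous function $f$ is bounded and on which $x\mapsto\scal{\nabla f(y)}{x-y}$ is bounded; hence $D(x_n,y)=f(x_n)-f(y)-\scal{\nabla f(y)}{x_n-y}$ is bounded, and this holds for every $y$. The implication $\ref{le:19.3}\Rightarrow\ref{le:19.2}$ needs no argument (choose any $y$).

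The real work is $\ref{le:19.2}\Rightarrow\ref{le:19.1}$. I would fix $y\in X$ with $\alpha:=\sup_\nnn D(x_n,y)<+\infty$ and introduce the real-valued (hence lower semicontinuous) convex function $g:=f-\scal{\nabla f(y)}{\cdot}$. A one-line computation gives $D(x,y)=g(x)-g(y)$ for every $x\in X$, so $(x_n)_\nnn$ is contained in the sublevel set $\menge{x\in X}{g(x)\le\alpha+g(y)}$; thus it suffices to show that $g$ has bounded sublevel sets. For this I would compute the conjugate, $g^*=f^*(\,\cdot+\nabla f(y))$, so that $\dom g^*=\dom f^*-\nabla f(y)$; since $f$ is Legendre, $\nabla f(y)\in\ran\nabla f=\operatorname{int}\dom f^*=\dom f^*$, the last equality by the standing assumption that $\dom f^*$ is open. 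Hence $0\in\operatorname{int}\dom g^*$, and the classical characterization of coercivity --- a proper lower semicontinuous convex function has all sublevel sets bounded if and only if the origin lies in the interior of the domain of its conjugate --- yields that $g$ has bounded sublevel sets, whence $(x_n)_\nnn$ is bounded.

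The only genuine obstacle is this last implication, and the point to get right is that the standing hypothesis ``$\dom f^*$ open'' is exactly what forces $0$ into the interior of $\dom g^*$; once that is in hand, the trivial identity $D(\cdot,y)=g-g(y)$ reduces the statement to the classical equivalence between bounded sublevel sets and the origin lying in the interior of the conjugate's domain.
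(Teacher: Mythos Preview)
Your proof is correct. The overall architecture---continuity of $D(\cdot,y)$ for \ref{le:19.1}$\Rightarrow$\ref{le:19.3} and coercivity of $D(\cdot,y)$ for \ref{le:19.2}$\Rightarrow$\ref{le:19.1}---matches the paper's, but the execution differs. For \ref{le:19.1}$\Rightarrow$\ref{le:19.3} the paper argues by contradiction via a convergent subsequence, whereas you simply invoke boundedness of the continuous function $D(\cdot,y)$ on a compact ball; your version is shorter. For \ref{le:19.2}$\Rightarrow$\ref{le:19.1} the paper just cites \cite[Theorem~3.7(iii)]{BB97} for the coercivity of $D(\cdot,y)$, whereas you supply a self-contained proof: you rewrite $D(\cdot,y)=g-g(y)$ with $g=f-\scal{\nabla f(y)}{\cdot}$, compute $\dom g^*=\dom f^*-\nabla f(y)$, and then use the standing hypothesis that $\dom f^*$ is open together with $\nabla f(y)\in\dom f^*$ to place $0$ in $\operatorname{int}\dom g^*$, invoking the standard equivalence between supercoercivity-type conditions and $0\in\operatorname{int}\dom(\cdot)^*$. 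This buys self-containment and makes transparent exactly where the openness assumption on $\dom f^*$ enters, at the cost of a few extra lines. One minor remark: you do not actually need the full Legendre property here---differentiability alone gives $\nabla f(y)\in\partial f(y)$ and hence $\nabla f(y)\in\dom f^*$ via Fenchel--Young, which combined with openness of $\dom f^*$ is all you use.
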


\begin{proof}  
``\ref{le:19.1}$\Rightarrow$\ref{le:19.3}'':
Let $y\in X$ and suppose that $(D(x_n,y))_\nnn$ is not bounded.
After passing to a subsequence if necessary, we assume that
$x_n\to{x}\in X$ and that $D(x_n,y)\to+\infty$. 
On the other hand, $D(x_n,y) \to D(x,y)\in\RP$.
Altogether, we have reached a contradiction. 

``\ref{le:19.3}$\Rightarrow$\ref{le:19.2}'': This is clear. 

``\ref{le:19.2}$\Rightarrow$\ref{le:19.1}'': 
Suppose that $(x_n)_\nnn$ is not bounded. 
After passing to a subsequence if necessary, we assume that
$\|x_n\|\to +\infty$. 
By \cite[Theorem~3.7.(iii)]{BB97}, $D(\cdot,y)$ is coercive and
thus $D(x_n,y)\to+\infty$, which is absurd.
\end{proof}

\begin{lemma}\label{le:111}
Let $x\in X$ and let $(y_n)_\nnn$ be a sequence in $X$ such that 
$(D(x,y_n))_\nnn$ is bounded. Then $(y_n)_\nnn$ is bounded.
 \end{lemma}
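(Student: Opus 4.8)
The plan is to pass to the conjugate $f^*$ and to exploit that $f$ is Legendre with $\dom f^*$ open. First I would abbreviate $v_n:=\nabla f(y_n)$ and invoke the Fenchel--Young equality $f(y_n)+f^*(v_n)=\scal{v_n}{y_n}$ (valid since $\nabla f(y_n)\in\partial f(y_n)$) to rewrite
\[
D(x,y_n)=f(x)-f(y_n)-\scal{\nabla f(y_n)}{x-y_n}=f(x)+f^*(v_n)-\scal{v_n}{x}.
\]
Thus, setting $g:=f^*-\scal{\cdot}{x}$ (a proper, lower semicontinuous, convex function on $X$), the hypothesis that $(D(x,y_n))_\nnn$ is bounded is precisely the statement that $(g(v_n))_\nnn$ is bounded.

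Next I would show that $(v_n)_\nnn$ is bounded. A direct computation gives $g^*=f(\cdot+x)$, whence $\dom g^*=\dom f-x=X$; by standard convex analysis (see, e.g., \cite{BC11}) this forces $g$ to be supercoercive, in particular to have bounded sublevel sets. Since $(g(v_n))_\nnn$ is bounded, the sequence $(v_n)_\nnn$ lies in one such sublevel set and is therefore bounded. (Alternatively, \cite[Theorem~3.7(iii)]{BB97} applied to the Legendre function $f^*$ gives the same conclusion.)

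Finally I would deduce that $(y_n)_\nnn$ is bounded. Since $f$ is Legendre we have $y_n=\nabla f^*(v_n)$, and $\nabla f^*$ is continuous on the open set $\dom f^*=\intr\dom f^*$. Suppose $(y_n)_\nnn$ were unbounded: pass to a subsequence with $\|y_{n_k}\|\to+\infty$, and then, using boundedness of $(v_{n_k})_\nnn$, to a further subsequence with $v_{n_{k_j}}\to\bar v\in\overline{\dom f^*}$. If $\bar v\in\dom f^*$, then continuity of $\nabla f^*$ gives $y_{n_{k_j}}=\nabla f^*(v_{n_{k_j}})\to\nabla f^*(\bar v)$, contradicting $\|y_{n_{k_j}}\|\to+\infty$. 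If $\bar v\notin\dom f^*$, then, because $\dom f^*$ is open, $f^*(\bar v)=+\infty$, and lower semicontinuity of $f^*$ yields $f^*(v_{n_{k_j}})\to+\infty$, hence $g(v_{n_{k_j}})\to+\infty$, contradicting the boundedness of $(g(v_n))_\nnn$. Either way we reach a contradiction, so $(y_n)_\nnn$ is bounded.

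The routine parts are the Fenchel identity and the coercivity of $g$; the step I expect to be delicate is the last dichotomy, since it is exactly there that the standing hypothesis ``$\dom f^*$ open'' is indispensable — if it is dropped, a limit point $\bar v$ can sit on the boundary of $\dom f^*$ with $f^*(\bar v)$ finite but $\nabla f^*(v_{n_{k_j}})$ unbounded (as for $f=\exp$ on $\RR$), and the conclusion of the lemma fails.
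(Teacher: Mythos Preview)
Your proof is correct. The paper, however, does not argue at all: it simply cites \cite[Corollary~3.11]{BB97}, which asserts (under the standing hypotheses here) that the sublevel sets of $D(x,\cdot)$ are bounded. What you have effectively done is reprove that corollary from scratch via the Fenchel--Young identity
\[
D(x,y_n)=f(x)+f^*\big(\nabla f(y_n)\big)-\scal{\nabla f(y_n)}{x}=f(x)+g(v_n),
\]
reducing matters to coercivity of $g=f^*-\scal{\cdot}{x}$ (which follows from $\dom g^*=\dom f - x = X$) together with the Legendre homeomorphism $\nabla f^*\colon\dom f^*\to X$. Your route has the merit of making transparent exactly where the standing assumption ``$\dom f^*$ open'' is used --- only in the closing dichotomy on the location of $\bar v$ --- and your $f=\exp$ example is a correct witness that the lemma genuinely fails without it. The paper's route, of course, buys brevity.
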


\begin{proof}
This follows readily from \cite[Corollary~3.11]{BB97}. 
\end{proof}

\vskip3mm

\section{Finding the Bregman projection by iteration}

\label{s:iteration}

In this section, we present an iteration scheme to find the 
projection
$\bproj{C}(x_0)$. 
It will be convenient to set, 
for every $(x,y)\in X\times X$, 
\begin{empheq}[box=\mybluebox]{align*}
H(x,y) &:= \menge{z\in X}{D(z,y)\leq D(z,x)}\\
&=\menge{z\in X}{\scal{\nabla f(x)-\nabla f(y)}{z}\leq f(y)-f(x)+
\scal{\nabla f(x)}{x}-\scal{\nabla f(y)}{y}},
\end{empheq}
which is either equal to $X$ (if $x=y$) or to a closed
halfspace (if $x\neq y$). 

\begin{algorithm}\label{alg:I} 
Given $x_0\in X$ and a nonempty closed convex subset $C_0$ of
$X$, set $n := 0$. 

Step 1. Take $y_{n}\in  X$ and set $C_{n+1}:=C_{n}\cap H(x_{n}, y_{n})$.\vskip1mm

Step 2. Compute
\begin{align}\label{alg:theprojection}
x_{n+1}:=\bproj{C_{n+1}}(x_{0})
\end{align}
and stop if provided a stopping criterion is satisfied.

Step 3. Set $n:=n+1$ and go to Step 1.
\end{algorithm}
\vskip2mm

\begin{remark}\label{re:alg}
Since each $H(x_n,y_n)$ is equal to either $X$ or some closed halfspace,
we note that the each set $C_{n+1}$ is closed and convex;
furthermore, if $C_0$ is a polyhedron, then so is $C_{n+1}$. 
\end{remark}

Let us collect some basic properties of Algorithm~\ref{alg:I}.

\begin{proposition}
\label{prop1}
Suppose that $(x_n)_\nnn$ is a sequence generated by
Algorithm~\ref{alg:I}.
Then the following hold:
\begin{enumerate}
\item \label{prop1.1}
{\bf (decreasing sets)} 
$(\forall\nnn)$ 
$C_n \supseteq C_{n+1}$.
\item \label{prop1.2}
{\bf (increasing distances)}
$(\forall\nnn)$ $D(x_{n},x_{0})\leq D(x_{n+1},x_{0})$. 
\item \label{prop1.3}
$(\forall k\in\NN)$ 
$\displaystyle \sum_{n=0}^{k} D(x_{n+1},x_{n})\leq D(\bproj{C_{k+1}}x_{0},x_{0})$.
\item \label{prop1.4}
The constant 
\begin{equation}\label{eq:23}
\beta := \lim_{\nnn}D(x_{n},x_{0}) = \sup_{n\in \NN}D(x_{n},x_{0})
\end{equation}
is well defined.

\item \label{prop1.5}
For all nonnegative integers $m$ and $n$ such that $m<n$, we have 
\begin{equation}\label{eq:24}
\langle \nabla f(x_0)-\nabla f(x_m), x_n-x_m\rangle \leq 0
\end{equation}
and
\begin{equation}\label{eq:25}
D(x_n,y_m)\leq D(x_n,x_m).
\end{equation}
\end{enumerate}
\end{proposition}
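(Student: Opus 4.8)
The plan is to derive all five items from two ingredients: the sets $(C_n)_\nnn$ generated by Algorithm~\ref{alg:I} are nested and decreasing, and for a nonempty closed convex set the Bregman projection onto it is characterized by the variational inequality~\eqref{e:leftproj} and the three-point inequality~\eqref{e:lprojchar} of Fact~\ref{fact:13}. First, Step~1 of the algorithm gives $C_{n+1}=C_n\cap H(x_n,y_n)\subseteq C_n$, which is~\ref{prop1.1}, and iterating this yields $C_n\subseteq C_m$ whenever $m\leq n$. Since $x_n=\bproj{C_n}(x_0)\in C_n$ for every $n\geq 1$, we obtain $x_n\in C_m$ for all $1\leq m\leq n$; in particular, when $m<n$ we have $x_n\in C_{m+1}\subseteq H(x_m,y_m)$, and unwinding the definition of $H(x_m,y_m)$ this membership says exactly that $D(x_n,y_m)\leq D(x_n,x_m)$, which is~\eqref{eq:25}.

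Next I would establish~\ref{prop1.2}: for $n\geq 1$ we have $x_{n+1}\in C_{n+1}\subseteq C_n$ while $x_n$ minimizes $D(\cdot,x_0)$ over $C_n$ by definition, so $D(x_n,x_0)\leq D(x_{n+1},x_0)$; for $n=0$ this reduces to $0=D(x_0,x_0)\leq D(x_1,x_0)$. Hence $(D(x_n,x_0))_\nnn$ is nondecreasing, so its limit exists in $[0,+\infty]$ and equals its supremum, which is~\ref{prop1.4}. For~\ref{prop1.3} I would apply~\eqref{e:lprojchar} with $C=C_n$ and $x=x_0$, so that $\bproj{C}(x)=x_n$, and with test point $c=x_{n+1}\in C_n$; this gives $D(x_{n+1},x_0)\geq D(x_{n+1},x_n)+D(x_n,x_0)$, that is, $D(x_{n+1},x_n)\leq D(x_{n+1},x_0)-D(x_n,x_0)$ for $n\geq 1$, and trivially also for $n=0$ since $D(x_0,x_0)=0$; summing over $n=0,\dots,k$ telescopes to $\sum_{n=0}^{k}D(x_{n+1},x_n)\leq D(x_{k+1},x_0)-D(x_0,x_0)=D(\bproj{C_{k+1}}x_0,x_0)$.

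Finally, for the inequality~\eqref{eq:24} in~\ref{prop1.5} I would invoke the variational characterization~\eqref{e:leftproj} with $C=C_m$ and $x=x_0$, so that $\bproj{C}(x)=x_m$, and with test point $c=x_n$; for $m<n$ and $m\geq 1$ we have $x_n\in C_n\subseteq C_m$, which yields $\scal{\nabla f(x_0)-\nabla f(x_m)}{x_n-x_m}\leq 0$, while for $m=0$ the left-hand side is $\scal{0}{x_n-x_0}=0$. I do not anticipate a genuine obstacle: the whole proof is bookkeeping around the nesting of the $C_n$. The only points needing care are the degenerate indices $m=0$ and $n=0$, where $x_0$ is the prescribed starting point rather than a Bregman projection, and keeping the correct offset in the inclusion $x_n\in C_{m+1}\subseteq H(x_m,y_m)$ used to obtain~\eqref{eq:25}.
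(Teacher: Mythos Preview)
Your proof is correct and follows essentially the same route as the paper: items~\ref{prop1.1}, \ref{prop1.2}, \ref{prop1.4} are declared ``clear'' there, item~\ref{prop1.3} is obtained by applying~\eqref{e:lprojchar} with $C=C_n$, $x=x_0$, $c=x_{n+1}$ and telescoping, and item~\ref{prop1.5} by applying~\eqref{e:leftproj} with $C=C_m$, $x=x_0$, $c=x_n$ together with $x_n\in C_{m+1}\subseteq H(x_m,y_m)$. Your treatment is in fact slightly more careful than the paper's at the degenerate indices $m=0$ and $n=0$, where $x_0$ is merely the prescribed starting point and not $\bproj{C_0}(x_0)$; the paper glosses over this, while you check it explicitly.
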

\begin{proof}
We only show \ref{prop1.3} and \ref{prop1.5} because the
other properties are clear.

\ref{prop1.3}: In \eqref{e:lprojchar}, put $x=x_{0}$, $C=C_{n}$. For
$x_{n+1}\in C_{n+1}\subseteq C_{n}$, we have
$D(x_{n+1},x_0)\geq D(x_{n+1},x_{n})+D(x_{n},x_{0})$, i.e., 
$D(x_{n+1},x_0)-D(x_{n},x_{0})\geq D(x_{n+1},x_{n})$.
Now sum the last inequality over $n\in\{0,1,\ldots,k\}$. 

\ref{prop1.5}: In \eqref{e:leftproj}, put $x=x_{0}$, $c=x_{n}$, $C=C_{m}$, noting that
$x_{m}=\overleftarrow{P}_{C_{m}}x_{0}$ and
$x_{n}\in C_{n}\subseteq C_{m}$ when $n>m$. This gives \eqref{eq:24}.
Finally \eqref{eq:25} follows because 
$x_{n}\in C_{n}\subseteq C_{m+1}=C_{m}\cap H(x_{m},y_{m})$.
\end{proof}

\bigskip

We now begin the convergence analysis of Algorithm~\ref{alg:I}.

\begin{lemma}
\label{le:21} 

Suppose that $(x_n)_\nnn$ is a sequence generated by
Algorithm~\ref{alg:I} and that $(x_n)_\nnn$ is bounded.
Then the following hold:
\begin{enumerate}
\item 
\label{le:21.0} 
$\sum_{\nnn} D(x_{n+1},x_{n})<+\infty$.
\item 
\label{le:21.1}
$x_{n+1}-x_{n}\to 0$. 
\end{enumerate}
\end{lemma}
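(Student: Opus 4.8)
The plan is to derive \ref{le:21.0} directly from the telescoping estimate in Proposition~\ref{prop1}\ref{prop1.3} together with the boundedness hypothesis, and then obtain \ref{le:21.1} from \ref{le:21.0} via Lemma~\ref{le:16}.

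For \ref{le:21.0}, I would start from Proposition~\ref{prop1}\ref{prop1.3}, which gives, for every $k\in\NN$,
\[
\sum_{n=0}^{k} D(x_{n+1},x_{n}) \leq D(\bproj{C_{k+1}}x_{0},x_{0}) = D(x_{k+1},x_{0}),
\]
the last equality being just \eqref{alg:theprojection}. The point is that the right-hand side is uniformly bounded: since $(x_n)_\nnn$ is bounded, Lemma~\ref{le:19} (implication \ref{le:19.1}$\Rightarrow$\ref{le:19.3}) shows that $(D(x_n,x_0))_\nnn$ is bounded, i.e., the constant $\beta$ of Proposition~\ref{prop1}\ref{prop1.4} is finite. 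Hence $\sum_{n=0}^{k} D(x_{n+1},x_{n}) \leq \beta$ for every $k$, and letting $k\to\infty$ yields $\sum_{\nnn} D(x_{n+1},x_{n}) \leq \beta < +\infty$.

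For \ref{le:21.1}, I would note that \ref{le:21.0} forces the general term of a convergent series to vanish, so $D(x_{n+1},x_n)\to 0$. Then I would apply Lemma~\ref{le:16} with the two sequences taken to be $(x_{n+1})_\nnn$ (in the role of $(x_n)$) and $(x_n)_\nnn$ (in the role of the bounded sequence $(y_n)$, bounded by hypothesis); since $D(x_{n+1},x_n)\to 0$, Lemma~\ref{le:16} gives $x_{n+1}-x_n\to 0$.

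I do not expect a real obstacle here. The only two steps needing care are (a) converting the boundedness of $(x_n)_\nnn$ into the finiteness of $\beta$ — which is exactly the content of Lemma~\ref{le:19} — and (b) correctly matching the hypotheses of Lemma~\ref{le:16} to the shifted pair of sequences rather than to a single sequence against a fixed point.
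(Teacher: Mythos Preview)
Your proposal is correct and follows essentially the same argument as the paper: use Lemma~\ref{le:19} to convert boundedness of $(x_n)_\nnn$ into boundedness of $(D(x_n,x_0))_\nnn$, plug this into Proposition~\ref{prop1}\ref{prop1.3} for summability, and then invoke Lemma~\ref{le:16} to pass from $D(x_{n+1},x_n)\to 0$ to $x_{n+1}-x_n\to 0$. Your version is just a more explicit write-up of the same two steps.
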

\begin{proof}
\ref{le:21.0}:
By Lemma~\ref{le:19}, 
$(D(\bproj{C_{n}}x_{0},x_{0}))_{\nnn}
= (D(x_{n},x_{0}))_{\nnn}$ is bounded. 
Now apply Proposition~\ref{prop1}\ref{prop1.3}.

\ref{le:21.1}: 
Since $D(x_{n+1},x_{n})\to 0$ by \ref{le:21.0},
we deduce 
from Lemma~\ref{le:16} that $x_{n+1}-x_{n}\to 0$.
\end{proof}

\begin{lemma}\label{le:22}
Suppose that $(x_n)_{n\in \NN}$ is a sequence generated by
Algorithm~\ref{alg:I} such that 
for every subsequence $(x_{k_n})_{n\in \NN}$ of $(x_n)_{n\in \NN}$, we have
\begin{equation}\label{eq:26}
\left.
\begin{array}{c}
x_{k_n}\to \bar{x}\\
x_{k_{n}}-y_{k_n}\to 0
\end{array}
\right\}
\;\;\Rightarrow\;\;
\bar{x}\in C.
\end{equation}
Then every bounded subsequence of $(x_n)_{n\in \NN}$ must converge 
to a point in $C$.
\end{lemma}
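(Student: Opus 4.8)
The plan is to take a bounded subsequence $(x_{m_n})_{n\in\NN}$ of $(x_n)_{n\in\NN}$ and show that it converges to some point in $C$. Since the subsequence is bounded, by passing to a further subsequence I may assume $x_{m_n}\to\bar x$ for some $\bar x\in X$; by the hypothesis \eqref{eq:26}, it will suffice to verify that this same (sub)subsequence can be chosen so that $x_{m_n}-y_{m_n}\to 0$, and then to upgrade convergence of the subsubsequence back to convergence of the original bounded subsequence.

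The key mechanism is as follows. Because $(x_{m_n})_{n\in\NN}$ is bounded, Lemma~\ref{le:19} tells us that $(D(x_{m_n},x_0))_{n\in\NN}$ is bounded, hence by Proposition~\ref{prop1}\ref{prop1.4} the full sequence $(D(x_n,x_0))_{n\in\NN}$ is bounded, so by Lemma~\ref{le:19} again the \emph{entire} sequence $(x_n)_{n\in\NN}$ is bounded. Now Lemma~\ref{le:21} applies: $\sum_{n\in\NN}D(x_{n+1},x_n)<+\infty$, so in particular $D(x_{n+1},x_n)\to 0$. Combined with \eqref{eq:25} of Proposition~\ref{prop1}\ref{prop1.5} (taking $m=n$, applied at index $n+1$, which gives $D(x_{n+1},y_n)\leq D(x_{n+1},x_n)$), we get $D(x_{n+1},y_n)\to 0$. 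Since $(x_{n+1})_{n\in\NN}$ is bounded, Lemma~\ref{le:16} yields $x_{n+1}-y_n\to 0$. Together with $x_{n+1}-x_n\to 0$ from Lemma~\ref{le:21}\ref{le:21.1}, this gives $x_n-y_n\to 0$ for the whole sequence, and a fortiori $x_{m_n}-y_{m_n}\to 0$ along any subsequence.

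With $x_{m_n}\to\bar x$ (along the chosen subsubsequence) and $x_{m_n}-y_{m_n}\to 0$, the implication \eqref{eq:26} gives $\bar x\in C$. It remains to remove the subsubsequence reduction: since $(x_n)_{n\in\NN}$ is bounded, $D(\cdot,x_0)$ is continuous, and $D(x_n,x_0)\to\beta$ by Proposition~\ref{prop1}\ref{prop1.4}, every cluster point $z$ of the bounded subsequence $(x_{m_n})_{n\in\NN}$ satisfies $D(z,x_0)=\beta$; moreover each such cluster point lies in $C$ by the argument just given, and each lies in every $C_k$ (since $x_{m_n}\in C_{m_n}\subseteq C_k$ eventually, and $C_k$ is closed). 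If $z_1,z_2$ are two cluster points, then from \eqref{eq:24} of Proposition~\ref{prop1}\ref{prop1.5}, passing to the limit along suitable indices, one obtains $\langle\nabla f(x_0)-\nabla f(z_1),z_2-z_1\rangle\leq 0$ and symmetrically $\langle\nabla f(x_0)-\nabla f(z_2),z_1-z_2\rangle\leq 0$; adding these gives $\langle\nabla f(z_1)-\nabla f(z_2),z_1-z_2\rangle\leq 0$, which by strict convexity of $f$ forces $z_1=z_2$. Hence the bounded subsequence has a unique cluster point and therefore converges, to a point in $C$.

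The main obstacle is the interplay in the limit passage in \eqref{eq:24}: the inequality $\langle\nabla f(x_0)-\nabla f(x_m),x_n-x_m\rangle\leq 0$ holds only for $m<n$, so to compare two cluster points $z_1=\lim x_{p_n}$ and $z_2=\lim x_{q_n}$ one must carefully interleave the indices $p_n$ and $q_n$ (choosing an increasing sequence alternating between the two families) before taking limits, using continuity of $\nabla f$. Once this bookkeeping is handled correctly, strict convexity closes the argument cleanly; everything else is a routine chaining of the already-established lemmas.
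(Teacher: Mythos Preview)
Your overall strategy differs from the paper's. The paper works entirely with the given bounded subsequence $(x_{k_n})$, shows directly that it is Cauchy (via \eqref{e:lprojchar} and the convergence of $D(x_{k_n},x_0)$ to $\beta$), and only then, having the limit $\bar x$ in hand, bounds $(y_{k_n})$ using $\bar x\in H(x_{k_n},y_{k_n})$, hence $D(\bar x,y_{k_n})\le D(\bar x,x_{k_n})\to 0$, together with Lemma~\ref{le:111}. You instead upgrade to boundedness of the \emph{full} sequence via Proposition~\ref{prop1}\ref{prop1.4} and Lemma~\ref{le:19}, invoke Lemma~\ref{le:21}, and argue uniqueness of cluster points from \eqref{eq:24} and strict monotonicity of $\nabla f$. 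The cluster-point uniqueness argument is correct (the index-interleaving you describe is exactly what is needed), and this route has the pleasant side effect of showing that the full sequence converges.

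However, there is a genuine gap at the step where you invoke Lemma~\ref{le:16} to pass from $D(x_{n+1},y_n)\to 0$ to $x_{n+1}-y_n\to 0$: that lemma requires the \emph{second} argument $(y_n)$ to be bounded, not the first, and you have only established boundedness of $(x_{n+1})$. Nothing in your argument up to that point controls $(y_n)$. The repair is to first pick any cluster point $\bar x$ of the bounded full sequence $(x_n)$; since $x_j\in C_{n+1}$ for all $j\ge n+1$ and $C_{n+1}$ is closed, one has $\bar x\in C_{n+1}\subseteq H(x_n,y_n)$ for every $n$, whence $D(\bar x,y_n)\le D(\bar x,x_n)$, which is bounded because $(x_n)$ is bounded and $D(\bar x,\cdot)$ is continuous. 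Lemma~\ref{le:111} then gives $(y_n)$ bounded, after which Lemma~\ref{le:16} applies as you intended. Note that this patch is exactly the mechanism the paper uses (with $\bar x$ the already-obtained limit of the Cauchy subsequence), so once the gap is filled your argument and the paper's share the same key idea for controlling $(y_n)$.
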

\begin{proof}
Suppose that $(x_{k_n})_{n\in \NN}$ is a bounded subsequence of 
$(x_n)_{n\in \NN}$. 
By Lemma~\ref{le:19}, $(D(x_{k_n},x_0))_\nnn$ is bounded.
Hence Proposition~\ref{prop1}\ref{prop1.4} implies that the constant
$\beta$ defined in \eqref{eq:23} belongs to $\RP$. 
Let $m$ and $n$ be in $\NN$ such that $m<n$. 
Then $x_{k_n}\in C_{k_n}\subseteq C_{k_m}$.
Using Fact \ref{fact:13} (applied to $x=x_{0},C=C_{k_m}$) 
and \eqref{eq:23}, we have
\begin{equation*}
D(x_{k_n},x_{k_m})
\leq D(x_{k_n},x_0) -D(x_{k_m},x_0)\to \beta-\beta = 0
\quad \text{as}\quad n>m\to +\infty.
\end{equation*}
It now follows from Lemma~\ref{le:16} that
$x_{k_n}-x_{k_m}\to 0$
as $n>m\to +\infty$,
i.e., $(x_{k_n})_{n\in \NN}$ is a Cauchy sequence.
Therefore, 
\begin{equation}
\label{e:0922a}
x_{k_{n+1}}-x_{k_n}\to 0
\end{equation} and
there exists $\bar{x}\in X$ such that 
$$x_{k_n}\to \bar{x}.$$
It follows from Remark~\ref{re:alg} and \eqref{alg:theprojection} 
that $\bar{x}\in C_{k_n}$ and that $\bar{x}\in H(x_{k_n},y_{k_n})$ 
for every $n\in \NN$.
By the definition of $H$, one has
$$D(\bar{x},y_{k_n}) \leq D(\bar{x},x_{k_n}) =f(\bar{x}) -f
(x_{k_n}) - \langle \nabla f (x_{k_n}),  \bar{x}- x_{k_n}\rangle
\to 0.$$
Hence $(D(\bar{x},y_{k_n}))_{n\in \NN}$ is bounded.
By Lemma~\ref{le:111}, $(y_{k_n})_{n\in \NN}$ is bounded too.
Now, from $x_{k_{n+1}}\in C_{k_{n+1}}\subseteq H(x_{k_n},y_{k_n})$, 
we obtain
$$D(x_{k_{n+1}},y_{k_n}) \leq D(x_{k_{n+1}},x_{k_n})  \to 0.$$
Again from Lemma~\ref{le:16}, one has
\begin{equation}
\label{e:0922b}
x_{k_{n+1}}-y_{k_n}\to 0.
\end{equation}
Combining \eqref{e:0922a} with \eqref{e:0922b}, we deduce that 
 $$\|x_{k_{n}}-y_{k_n}\|\leq \|x_{k_{n}}-x_{k_{n+1}}  \|+ \|x_{k_{n+1}}-y_{k_n}\|\rightarrow 0.$$
This and \eqref{eq:26} yield the result. 
\end{proof}

\bigskip

Lemma~\ref{le:21} and Lemma~\ref{le:22} allow us to derive the following dichotomy result.

\begin{theorem}[dichotomy] \label{th:23}
Suppose that $(x_n)_{n\in \NN}$ is generated by
Algorithm~\ref{alg:I}, that $(\forall \nnn)$ $C\subseteq C_n$,
and that for every subsequence $(x_{k_n})_{n\in \NN}$ of $(x_n)_{n\in \NN}$, we have
\begin{equation}\label{eq:28}
\left.
\begin{array}{c}
x_{k_n}\to \bar{x}\\
x_{k_{n}}-y_{k_n}\to 0
\end{array}
\right\}
\;\;\Rightarrow\;\;
\bar{x}\in C.
\end{equation}
Then exactly one of the following holds:
\begin{enumerate}
\item 
\label{th:23.0}
$C\neq\varnothing$, $x_n\to \bproj{C}x_0$,
and
$\sum_{\nnn}D(x_{n+1},x_{n})<+\infty$.
\item \label{th:23.1}
$C=\varnothing$ and $\|x_n\|\to+\infty$.
\end{enumerate}
\end{theorem}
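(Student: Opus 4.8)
The plan is to split into the two cases according to whether $(x_n)_\nnn$ is bounded or unbounded, show these cases are mutually exclusive, and identify each with the corresponding alternative in the statement. First I would handle the \emph{bounded} case. Suppose $(x_n)_\nnn$ is bounded. By Lemma~\ref{le:22} (whose hypothesis \eqref{eq:26} is exactly \eqref{eq:28}), the whole sequence $(x_n)_\nnn$, being a bounded subsequence of itself, converges to some $\bar x\in C$; in particular $C\neq\varnothing$. To identify the limit with $\bproj{C}x_0$, I would invoke Proposition~\ref{prop1}\ref{prop1.5}: for every fixed $m$ and all $n>m$, $\langle\nabla f(x_0)-\nabla f(x_m),x_n-x_m\rangle\leq 0$; letting $n\to\infty$ and using continuity gives $\langle\nabla f(x_0)-\nabla f(x_m),\bar x-x_m\rangle\leq 0$ for all $m$. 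Since $C\subseteq C_n$ for all $n$, we have $\bar x\in C_m$ and $x_m=\bproj{C_m}x_0$, and together with $x_m\to\bar x$ and continuity of $\nabla f$, this forces (comparing with the characterization \eqref{e:leftproj}) that $\bar x$ satisfies $\langle\nabla f(x_0)-\nabla f(\bar x),c-\bar x\rangle\leq 0$ for every $c\in C$ — indeed any $c\in C$ lies in every $C_m$, so $\langle\nabla f(x_0)-\nabla f(x_m),c-x_m\rangle\leq 0$, and passing to the limit yields the claim. Hence $\bar x=\bproj{C}x_0$. Finally, Lemma~\ref{le:21}\ref{le:21.0} gives $\sum_\nnn D(x_{n+1},x_n)<+\infty$, establishing alternative \ref{th:23.0}.

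Next I would treat the \emph{unbounded} case. If $(x_n)_\nnn$ is not bounded, I claim $\|x_n\|\to+\infty$ and $C=\varnothing$. For the norm divergence: by Proposition~\ref{prop1}\ref{prop1.4} the sequence $(D(x_n,x_0))_\nnn$ is nondecreasing with limit $\beta$; if $\beta<+\infty$ then Lemma~\ref{le:19}\ref{le:19.2}$\Rightarrow$\ref{le:19.1} (applied with $y=x_0$) would force $(x_n)_\nnn$ bounded, a contradiction. So $\beta=+\infty$, i.e.\ $D(x_n,x_0)\to+\infty$; but if some subsequence $x_{k_n}$ were bounded, then along it $D(x_{k_n},x_0)$ would be bounded by Lemma~\ref{le:19}, contradicting $D(x_n,x_0)\to+\infty$. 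Hence no bounded subsequence exists, which for a sequence in finite-dimensional $X$ means $\|x_n\|\to+\infty$. For $C=\varnothing$: if some $c\in C$, then $c\in C_n$ for all $n$ since $C\subseteq C_n$, so $x_n=\bproj{C_n}x_0$ gives via \eqref{e:lprojchar} that $D(c,x_0)\geq D(c,x_n)+D(x_n,x_0)\geq D(x_n,x_0)$, bounding $(D(x_n,x_0))_\nnn$ — again contradicting $\beta=+\infty$. So alternative \ref{th:23.1} holds.

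Finally, the two alternatives are mutually exclusive: \ref{th:23.0} has $C\neq\varnothing$ while \ref{th:23.1} has $C=\varnothing$; and one of them always holds since every sequence is either bounded or unbounded. This gives the exact dichotomy.

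I expect the main obstacle to be the limit argument identifying $\bar x$ with $\bproj{C}x_0$ in the bounded case — one must be careful that the inclusion $C\subseteq C_n$ is what allows an arbitrary $c\in C$ to be tested against each $\bproj{C_m}x_0$, and that continuity of $\nabla f$ (a consequence of the Legendre/differentiability standing assumption) legitimizes passing to the limit in \eqref{e:leftproj}. Everything else is a routine assembly of the already-established lemmas.
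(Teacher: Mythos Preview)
Your proof is correct, but it is organized differently from the paper's and uses a different argument at one key step. The paper splits according to whether $C$ is nonempty or empty, whereas you split according to whether $(x_n)_\nnn$ is bounded or unbounded; these turn out to be equivalent under the hypotheses, so both decompositions work. More substantively, to identify the limit $\bar x$ with $\bproj{C}x_0$ the paper does not pass to the limit in the variational characterization \eqref{e:leftproj} as you do. Instead it records at the outset the single inequality
\[
(\forall\nnn)\quad D(x_n,x_0)=\inf_{c\in C_n}D(c,x_0)\leq \inf_{c\in C}D(c,x_0),
\]
which follows from $C\subseteq C_n$; this one line does double duty, yielding boundedness of $(x_n)_\nnn$ when $C\neq\varnothing$ (via Lemma~\ref{le:19}) and, after letting $n\to\infty$, the optimality $D(\bar x,x_0)\leq\inf_{c\in C}D(c,x_0)$ directly. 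Your route via \eqref{e:leftproj} and continuity of $\nabla f$ is perfectly valid (and the detour through Proposition~\ref{prop1}\ref{prop1.5} is unnecessary --- the direct argument you give after ``indeed'' suffices), but the paper's distance-based identification is shorter and avoids any appeal to continuity of $\nabla f$. For the unbounded case, your argument via $\beta=+\infty$ is a correct and slightly more explicit version of the paper's contrapositive (``if $\|x_n\|\not\to+\infty$ then a bounded subsequence exists, hence $C\neq\varnothing$ by Lemma~\ref{le:22}'').
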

\begin{proof}
Note first that
\begin{equation}\label{eq:29}
(\forall n\in \NN)\quad
D(x_{n},x_0) =\inf_{c\in C_{n}}D(c,x_0)\leq \inf_{c\in C}D(c,x_0).
\end{equation}
\ref{th:23.0}:
Assume that $C\neq\varnothing$.
Then $(x_n)_{n\in \NN}$ is bounded by \eqref{eq:29} and Lemma~\ref{le:19}.
By Lemma~\ref{le:22},
 $$\bar{x} := \lim_{n\in \NN} x_n \in C.$$
On the other hand, \eqref{eq:29} yields
\begin{equation*}
D(\bar{x},x_0) \leq \inf_{c\in C}D(c,x_0).
\end{equation*}
Altogether, $\bar{x}=\bproj{C}x_0$. 
Finally, $\sum_{\nnn} D(x_{n+1},x_n)<+\infty$ because of 
Lemma~\ref{le:21}.

\ref{th:23.1}:
Suppose that $\|x_n\|\not\rightarrow+\infty$.
Then $(x_n)_{n\in \NN}$ contains a bounded subsequence which,
by Lemma \ref{le:22}, must converge to a point in $C$.
Therefore if $C=\varnothing$, then $\|x_n\|\rightarrow+\infty$.
\end{proof}

\section{Fixed points of quasi Bregman nonexpansive mappings}
\label{s:asymptotic}

In this section, we shall apply the results in Section~\ref{s:iteration} to find the Bregman nearest fixed point of a quasi Bregman nonexpansive mapping.

\subsection{Quasi Bregman nonexpansive (QBNE) mappings}
Let $E$ be a nonempty closed convex subset of $X$. 
The \emph{fixed point set} of $T:E\to X$ is
$\Fix T :=\menge{x\in E}{Tx=x}$.

\begin{definition}\label{def:30}
Let $E$ be a nonempty closed convex subset of $X$,
and let $T\colon E\to X$. Then $T$ is said to be:
\begin{enumerate}
\item \emph{fixed-point closed} if, for every sequence
$(x_{n})_\nnn$ in $E$, 
$\displaystyle \left.
\begin{array}{c}
x_{n}\to \bar{x}\\
x_{n}-Tx_{n}\to 0
\end{array}
\right\}
\;\;\Rightarrow\;\;
\bar{x}\in \Fix T$.
\item \emph{quasi Bregman nonexpansive (QBNE)} if
$(\forall x\in \Fix T)(\forall y\in E)$
$D(x,Ty)\leq D(x,y)$.
\end{enumerate}
\end{definition}

It is easy to see that if $T:E\to X$ is QBNE, then
$\Fix T\subseteq \bigcap_{x\in E}H(x,Tx)$.

\begin{fact}\label{fact:34}
Let $E$ be a nonempty closed convex subset of $X$,
and let $T\colon E\to X$ be QBNE.
Then $\Fix T$ is closed and convex.
\end{fact}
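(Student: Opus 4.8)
The plan is to prove that $\Fix T$ is convex first, and then that it is closed.

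\medskip

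\textbf{Convexity.}
Fix $u, v \in \Fix T$ and $\lambda \in [0,1]$, and set $w := \lambda u + (1-\lambda) v$; the goal is to show $Tw = w$. The natural device is the three-point identity for the Bregman distance: for all $a, b, c$,
\begin{equation*}
D(a,c) = D(a,b) + D(b,c) + \scal{\nabla f(b) - \nabla f(c)}{a - b}.
\end{equation*}
Applying the QBNE inequality at the fixed points $u$ and $v$ with $y = w$ gives $D(u, Tw) \leq D(u, w)$ and $D(v, Tw) \leq D(v, w)$. Now form the convex combination $\lambda D(u, Tw) + (1-\lambda) D(v, Tw) \leq \lambda D(u, w) + (1-\lambda) D(v, w)$, and expand both sides using the three-point identity with the "middle point" $b = w$ (so the terms $\scal{\nabla f(w) - \nabla f(Tw)}{\cdot - w}$ on the left collapse, since $\lambda(u-w) + (1-\lambda)(v-w) = 0$, and likewise the gradient term on the right vanishes identically because $b = c = w$). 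What survives is
\begin{equation*}
D(w, Tw) + \lambda D(u,w) + (1-\lambda) D(v,w) \leq \lambda D(u,w) + (1-\lambda) D(v,w),
\end{equation*}
hence $D(w, Tw) \leq 0$, and since $D \geq 0$ this forces $D(w, Tw) = 0$. Because $f$ is strictly convex, $D(w, Tw) = 0$ implies $w = Tw$, so $w \in \Fix T$.

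\medskip

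\textbf{Closedness.}
Let $(x_n)_\nnn$ be a sequence in $\Fix T$ with $x_n \to \bar x$; since $E$ is closed, $\bar x \in E$. The claim is $\bar x \in \Fix T$, i.e.\ $D(\bar x, T\bar x) = 0$. The idea is to exploit QBNE with $\bar x$ as the "test point" being approximated: we do not yet know $\bar x \in \Fix T$, so instead apply QBNE with $y = \bar x$ at the known fixed point $x_n$, giving $D(x_n, T\bar x) \leq D(x_n, \bar x)$. As $n \to \infty$, the right side $D(x_n, \bar x) \to D(\bar x, \bar x) = 0$ by continuity of $f$ and $\nabla f$. Hence $D(x_n, T\bar x) \to 0$. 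By Lemma~\ref{le:16} (with $y_n \equiv T\bar x$, a constant and hence bounded sequence), this yields $x_n - T\bar x \to 0$, so $x_n \to T\bar x$. Combined with $x_n \to \bar x$ and uniqueness of limits, $T\bar x = \bar x$, so $\bar x \in \Fix T$.

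\medskip

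\textbf{Main obstacle.}
The only delicate point is the convexity argument: one must choose the application of the three-point identity so that all the inner-product (gradient) terms cancel, which works precisely because the middle point $w$ is the same convex combination that trivializes $\lambda(u-w)+(1-\lambda)(v-w)=0$. A cleaner bookkeeping alternative is to use the identity $D(a,c) - D(a,b) = D(b,c) + \scal{\nabla f(b)-\nabla f(c)}{a-b}$ directly with $c = Tw$, $b = w$; then multiply by the weights and sum. For closedness, there is essentially no obstacle once one realizes that QBNE should be invoked with the limit point in the \emph{second} slot and a genuine fixed point in the first slot, rather than the other way around. Strict convexity of $f$ (a standing assumption) is what converts "$D = 0$" into "equality of points" in both parts.
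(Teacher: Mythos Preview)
Your proof is correct. Both parts are carried out cleanly: the convexity argument via the three-point identity with the cancellation $\lambda(u-w)+(1-\lambda)(v-w)=0$ is the standard one, and the closedness argument correctly applies QBNE with the limit point in the second slot and then invokes Lemma~\ref{le:16}. One small point worth making explicit is that $w\in E$ (since $E$ is convex) and $\bar x\in E$ (since $E$ is closed), so that $Tw$ and $T\bar x$ are actually defined; you do say the latter but not the former.

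The paper itself does not give a self-contained proof: it simply refers the reader to \cite[Lemma~15.5]{RS011} or \cite[Proposition~3.3(iv)\&(vii)]{BBC03}. Your argument is essentially what one finds upon unpacking those references, so the approaches are the same in substance; the difference is only that you spell out the details whereas the paper outsources them.
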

\begin{proof}
Inspect the 
\cite[proof of Lemma~15.5]{RS011}, or combine
\cite[Proposition~3.3(iv)\&(vii)]{BBC03}.
\end{proof}

\subsection{Finding the  Bregman nearest fixed point}

When applied to a quasi Bregman nonexpansive mapping with the fixed-point closedness property, Algorithm~\ref{alg:I}
and Theorem~\ref{th:23} together
provide an iterative method for finding the
Bregman nearest fixed point.

\begin{theorem}[trichotomy] \label{th:35}
Let $E$ be a nonempty closed convex subset of $X$,
let $T\colon E\to X$ be QBNE and fixed-point closed, 
let $x_0\in X$, and let $C_0$ be a closed convex nonempty subset
of $X$ containing $\Fix T$.
Define sequences $(C_n)_\nnn$ and $(x_n)_\nnn$ by 
\begin{equation*}
\label{eq:31}
(\forall \nnn)\quad
C_{n+1} := C_n \cap H(x_n,Tx_n)
\;\;\text{and}\;\;
x_{n+1} = \bproj{C_{n+1}}x_0.
\end{equation*}
Then exactly one of the following holds:
\begin{enumerate}
\item\label{th:35.0}
$\Fix T\neq\varnothing$, $x_n\to\bproj{\Fix T}x_0$
and
$\sum_{n\in\NN}D(x_{n+1},x_{n})<+\infty$. 
\item
\label{th:35.1}
$\Fix T=\varnothing$ and $\|x_n\|\to+\infty$.
\item
\label{th:35.2} 
$\Fix T=\varnothing$ and the sequence is not well defined 
(i.e.,
$C_{n+1}=\varnothing$ for some $n\in\NN$).
\end{enumerate}
\end{theorem}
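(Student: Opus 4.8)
The plan is to deduce Theorem~\ref{th:35} from Theorem~\ref{th:23} by choosing $y_n := Tx_n$ in Algorithm~\ref{alg:I} and setting $C := \Fix T$. First I would verify that this choice is legitimate: the recursion in the statement is exactly Algorithm~\ref{alg:I} with $y_n = Tx_n$, provided each iterate is well defined, i.e.\ provided $C_{n+1} = C_n \cap H(x_n,Tx_n) \neq \varnothing$ so that the Bregman projection $\bproj{C_{n+1}}x_0$ exists. So the first case split is: either the sequence is well defined for all $n$, or it fails at some finite stage. In the latter case, I would argue that $\Fix T = \varnothing$: indeed, if $\Fix T \neq \varnothing$, then since $T$ is QBNE we have $\Fix T \subseteq \bigcap_{x\in E} H(x,Tx)$ (the remark after Definition~\ref{def:30}), and since $\Fix T \subseteq C_0$ by hypothesis, an easy induction gives $\Fix T \subseteq C_n$ for all $n$; in particular every $C_n$ is nonempty, so the sequence is well defined. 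Contrapositively, if the sequence breaks down, $\Fix T = \varnothing$, which is exactly alternative~\ref{th:35.2}.

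Next, assuming the sequence is well defined for all $n$, I would check the two hypotheses of Theorem~\ref{th:23} with $C = \Fix T$. The inclusion $C \subseteq C_n$ for all $n$ is the induction just described (using $\Fix T \subseteq C_0$ and $\Fix T \subseteq H(x_n,Tx_n)$, the latter from QBNE). The implication \eqref{eq:28} is precisely the fixed-point closedness of $T$: if $x_{k_n}\to\bar x$ and $x_{k_n} - y_{k_n} = x_{k_n} - Tx_{k_n} \to 0$, then $\bar x \in \Fix T = C$. One subtlety here is that fixed-point closedness is stated for sequences in $E$, so I should note that the relevant iterates $x_{k_n}$ lie in $E$ — this holds because $x_{k_n} \in C_{k_n} \subseteq C_0$, and one needs $C_0 \subseteq E$ or at least that the limit argument stays in the domain of $T$; I would either assume $C_0\subseteq E$ implicitly or observe that $x_{k_n}\in\Fix T$-containing sets forces the relevant points into $E$. (Looking at the setup, $H(x_n,Tx_n)$ only makes sense once $Tx_n$ is defined, so implicitly $x_n \in E$; this should be folded into ``well defined.'')

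With both hypotheses of Theorem~\ref{th:23} verified, Theorem~\ref{th:23} gives exactly one of: (\ref{th:23.0}) $\Fix T \neq \varnothing$, $x_n \to \bproj{\Fix T}x_0$, and $\sum_n D(x_{n+1},x_n) < +\infty$ — this is alternative~\ref{th:35.0}; or (\ref{th:23.1}) $\Fix T = \varnothing$ and $\|x_n\|\to+\infty$ — this is alternative~\ref{th:35.1}. Combining with the breakdown case from the first paragraph yields the trichotomy, and the three cases are mutually exclusive (the first has $\Fix T\neq\varnothing$; the second and third are distinguished by whether the sequence is well defined — and if it is well defined with $\Fix T=\varnothing$, Theorem~\ref{th:23}\ref{th:23.1} forces $\|x_n\|\to+\infty$, so case~\ref{th:35.2} genuinely requires breakdown).

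The main obstacle I anticipate is not any deep estimate — the analytic heavy lifting is already done in Lemmas~\ref{le:21}--\ref{le:22} and Theorem~\ref{th:23} — but rather the bookkeeping around well-definedness: cleanly separating the case where projections exist at every step from the case where $C_{n+1}$ becomes empty, and making sure that in the ``good'' case the hypotheses of Theorem~\ref{th:23} (especially $C \subseteq C_n$, which silently uses both $\Fix T\subseteq C_0$ and the QBNE-derived $\Fix T\subseteq H(x_n,Tx_n)$) are all in force. Once that case analysis is set up, the rest is a direct citation of Theorem~\ref{th:23}.
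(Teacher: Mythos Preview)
Your proposal is correct and follows essentially the same route as the paper: set $C=\Fix T$, $y_n=Tx_n$, verify $\Fix T\subseteq C_n$ inductively from QBNE and $\Fix T\subseteq C_0$, use fixed-point closedness for \eqref{eq:28}, and invoke Theorem~\ref{th:23}. The paper organizes the case split by first asking whether $\Fix T$ is empty (rather than whether the sequence is well defined), but the logic is identical; your observation that one implicitly needs $x_n\in E$ for $Tx_n$ to make sense is a genuine domain subtlety that the paper simply leaves unaddressed.
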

\begin{proof}
Suppose that $C=\Fix T$ and set 
$(y_{n})_\nnn=(Tx_{n})_\nnn$ when $(x_n)_\nnn$
is well defined.
In this case, it is clear that \eqref{eq:28} holds because $T$ is
fixed-point closed.

\ref{th:35.0}: Assume that $C\neq\varnothing$. We show inductively
that $(\forall\nnn)$ $C\subseteq C_n$. Note that $C\subseteq
C_0 \neq\varnothing$. Suppose that $C\subseteq C_n$ for
some $n\in \NN$. Then $x_n$ is well defined
and $C\subseteq H(x_n,Tx_n)$ because $T$ is QBNE.
Moreover, $C\subseteq  C_n\cap H(x_n,Tx_n)
= C_{n+1}$. Therefore $(\forall \nnn)$ 
$C\subseteq C_n$, and $C_n$ is nonempty, closed, and convex 
by Remark~\ref{re:alg}.
Hence, the sequence $(x_n)_\nnn$ is well defined.
The conclusion thus follows from Theorem~\ref{th:23}.

\ref{th:35.1}$\&$\ref{th:35.2}: Assume that
$C=\varnothing$.
If $(x_n)_\nnn$ is not well defined, then \ref{th:35.2} happens.
Finally, if $(x_n)_\nnn$ is well defined, then \ref{th:35.1}
occurs, again by Theorem~\ref{th:23}.
\end{proof}

\subsection{Bregman subgradient projectors}

Let us now show that every Bregman subgradient projector is QBNE and 
that it has the fixed point closedness property.
We can also arrange that $C$ is its fixed point set. 
This guarantees that Theorem~\ref{th:35} is applicable to
Bregman subgradient projectors.

For the remainder of this paper, we assume that
\begin{empheq}[box=\mybluebox]{equation*}
\text{$g:X\to \RR$ is a continuous and convex with }
\lev{\leq 0} g:=\menge{x\in X}{g(x)\leq 0} \neq \varnothing,
\end{empheq}
and that 
\begin{empheq}[box=\mybluebox]{equation*}
(\forall z\in X)(\forall z^*\in\partial g(z))\quad
H_{g}(z,z^*):=\menge{ x \in X}{ g(z) +\scal{z^{*}}{ x-z}\leq 0}.
\end{empheq}

The following result follows directly from the definitions.

\begin{proposition}
\label{prop:40} 
Let $z\in X$ and let $z^*\in\partial g(z)$. 
Then the following hold:
\begin{enumerate}
\item 
\label{prop:40.0} 
$\lev{\leq 0} g\subseteq H_{g}(z,z^{*})$.
\item 
\label{prop:40.1} 
$H_{g}(z,z^*)$ is convex, closed, and nonempty; it is a halfspace
when $z^*\neq 0$.
\item 
\label{prop:40.2} 
$z\in H_{g}(z,z^{*})\Leftrightarrow z\in\lev{\leq 0} g$.
\end{enumerate}
\end{proposition}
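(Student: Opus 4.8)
The plan is to verify the three assertions directly from two ingredients: the subgradient inequality encoded in ``$z^*\in\partial g(z)$'', and the affine structure of $H_g(z,z^*)$. No result beyond the standing assumptions is needed, so I expect no genuine obstacle; the only care required is bookkeeping about which hypothesis supplies nonemptiness.

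For \ref{prop:40.0}, I would take an arbitrary $x\in\lev{\leq 0}g$, so $g(x)\leq 0$, and apply the subgradient inequality $g(x)\geq g(z)+\scal{z^*}{x-z}$. Chaining the two gives $g(z)+\scal{z^*}{x-z}\leq g(x)\leq 0$, i.e.\ $x\in H_g(z,z^*)$. For \ref{prop:40.1}, I would rewrite $H_g(z,z^*)=\menge{x\in X}{\scal{z^*}{x}\leq\scal{z^*}{z}-g(z)}$, which displays it as the sublevel set of a continuous affine functional; such a set is always closed and convex, and it is a proper halfspace precisely when the linear part $z^*$ is nonzero. Nonemptiness is then immediate from \ref{prop:40.0} combined with the standing hypothesis $\lev{\leq 0}g\neq\varnothing$. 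For \ref{prop:40.2}, substituting $x=z$ into the defining inequality annihilates the inner-product term and leaves $g(z)\leq 0$; hence $z\in H_g(z,z^*)$ if and only if $g(z)\leq 0$, which is exactly $z\in\lev{\leq 0}g$.

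The step I would flag as ``the main obstacle'', though it is really just a point of diligence rather than a difficulty, is the nonemptiness claim in \ref{prop:40.1}: one must route it through \ref{prop:40.0} and the assumed nonemptiness of $\lev{\leq 0}g$, rather than through $z$ itself, since $z$ need not lie in $H_g(z,z^*)$ (indeed \ref{prop:40.2} says it does so exactly when $g(z)\leq 0$). Everything else is a one-line unwinding of definitions.
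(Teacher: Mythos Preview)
Your argument is correct and is precisely the direct unwinding of definitions that the paper has in mind; the paper does not give a written proof beyond the remark that the result ``follows directly from the definitions.'' Your observation that nonemptiness in \ref{prop:40.1} must come from \ref{prop:40.0} together with the standing assumption $\lev{\leq 0} g\neq\varnothing$, rather than from $z$ itself, is exactly right.
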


\begin{definition}
Let $s\colon X\to X$ be a selection of $\partial g$, i.e.,
$(\forall z\in X)$ $s(z)\in\partial g(z)$. 
The associated \emph{(left) Bregman subgradient projector} onto
$\lev{\leq 0}g$ is 
\begin{equation}\label{eq:41.0}
Q_s \colon X\to X \colon z\mapsto \bproj{H_g(z,s(z))}(z).
\end{equation}
\end{definition}

The following result is known.

\begin{lemma}
\label{le:41}
{\rm (See \cite[Propositions~3.3 and 3.38]{BBC03}.)}
$Q_s$ is QBNE with $\Fix Q_{s} =\lev{\leq 0} g$.
\end{lemma}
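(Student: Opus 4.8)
The plan is to verify the two assertions separately, each by a direct appeal to the definitions together with Fact~\ref{fact:13} and Proposition~\ref{prop:40}.

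First I would pin down the fixed point set. Fix $z\in X$ and set $z^*:=s(z)\in\partial g(z)$, so that $Q_sz=\bproj{H_g(z,z^*)}(z)$. Because $f$ is strictly convex, $D(c,z)=0$ holds if and only if $c=z$; hence for any nonempty closed convex set the Bregman projection of a point onto that set returns the point precisely when the point already lies in the set. Applying this to $C:=H_g(z,z^*)$ gives $Q_sz=z\iff z\in H_g(z,z^*)$. By Proposition~\ref{prop:40}\ref{prop:40.2} we have $z\in H_g(z,z^*)\iff z\in\lev{\leq 0}g$, and chaining these two equivalences yields $\Fix Q_s=\lev{\leq 0}g$.

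Next I would establish the QBNE inequality. Let $x\in\Fix Q_s=\lev{\leq 0}g$ and let $y\in X$; put $y^*:=s(y)\in\partial g(y)$, so $Q_sy=\bproj{H_g(y,y^*)}(y)$. By Proposition~\ref{prop:40}\ref{prop:40.0}, $x\in\lev{\leq 0}g\subseteq H_g(y,y^*)$. Now invoke the characterization \eqref{e:lprojchar} of Fact~\ref{fact:13} with the point $y$ playing the role of the point being projected, the set $C:=H_g(y,y^*)$, and $c:=x\in C$; this gives
\begin{equation*}
D(x,y)\ \geq\ D\big(x,\bproj{H_g(y,y^*)}(y)\big)+D\big(\bproj{H_g(y,y^*)}(y),y\big)\ =\ D(x,Q_sy)+D(Q_sy,y).
\end{equation*}
Since $D\geq 0$, we conclude $D(x,Q_sy)\leq D(x,y)$, which is exactly the defining inequality of a QBNE mapping.

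There is no genuine obstacle here: the argument is a two-line consequence of the variational characterization of the Bregman projector. The only point that requires a moment of care is the elementary fact that $\bproj{C}(z)=z\iff z\in C$, which rests on the strict convexity of $f$ (so that $D(c,z)=0$ forces $c=z$) and which is precisely what transfers Proposition~\ref{prop:40}\ref{prop:40.2} into the statement about $\Fix Q_s$.
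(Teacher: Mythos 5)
Your argument is correct. Note, however, that the paper does not prove this lemma at all: it simply cites \cite[Propositions~3.3 and 3.38]{BBC03}, where $Q_s$ is recognized as an instance of a Bregman(-firmly) nonexpansive operator class and its fixed point set is computed there. What you have done instead is give a short self-contained proof using only the tools already present in the paper: the identification $\Fix Q_s=\lev{\leq 0}g$ follows from $\bproj{H_g(z,s(z))}(z)=z\iff z\in H_g(z,s(z))$ (the forward implication because the projection lies in the set, the reverse because strict convexity of $f$ gives $D(c,z)=0\iff c=z$) combined with Proposition~\ref{prop:40}\ref{prop:40.2}, and the QBNE inequality follows from Proposition~\ref{prop:40}\ref{prop:40.0} together with the three-point characterization \eqref{e:lprojchar} applied to the set $H_g(y,s(y))$ and the fixed point $x$ as the test point $c$, dropping the nonnegative term $D(Q_sy,y)$. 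The only point worth flagging is that Fact~\ref{fact:13} is stated for the standing set $C$, so you are implicitly using that it holds for an arbitrary nonempty closed convex set; this is legitimate (and is exactly how the paper itself uses it, e.g.\ with $C=C_n$ in Proposition~\ref{prop1}), since $H_g(y,s(y))$ is nonempty, closed, and convex by Proposition~\ref{prop:40}\ref{prop:40.1}. Your route buys a transparent, citation-free verification at the cost of a few lines; the paper's citation buys brevity and places $Q_s$ within the broader framework of \cite{BBC03}.
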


We now show that $Q_{s}$ is fixed-point closed.

\begin{lemma}
\label{le:42}
$Q_s$ is fixed-point closed.
\end{lemma}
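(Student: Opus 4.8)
The plan is to show that if $z_n \to \bar z$ and $z_n - Q_s z_n \to 0$, then $\bar z \in \lev{\leq 0} g = \Fix Q_s$ (Lemma~\ref{le:41}). Writing $z_n^* := s(z_n) \in \partial g(z_n)$, the key object is the point $Q_s z_n = \bproj{H_g(z_n,z_n^*)}(z_n)$, which by \eqref{e:leftproj} lies in the halfspace (or all of $X$) $H_g(z_n,z_n^*)$, i.e.\ $g(z_n) + \scal{z_n^*}{Q_s z_n - z_n} \leq 0$.

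First I would dispose of the easy case: if $z_n^* = 0$ for infinitely many $n$, then along that subsequence $H_g(z_n,z_n^*) = X$, so $Q_s z_n = z_n$ (the Bregman projection onto $X$ is the identity), hence $z_n \in \Fix Q_s = \lev{\leq 0}g$ for those $n$, and since $g$ is continuous and $\lev{\leq 0}g$ is closed, $\bar z \in \lev{\leq 0}g$. So I may assume $z_n^* \neq 0$ for all large $n$, i.e.\ each $H_g(z_n,z_n^*)$ is a genuine halfspace and $z_n \notin \lev{\leq 0}g$, equivalently $g(z_n) > 0$ by Proposition~\ref{prop:40}\ref{prop:40.2}. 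The main estimate comes from the halfspace membership: from $g(z_n) \leq \scal{z_n^*}{z_n - Q_s z_n} \leq \|z_n^*\|\,\|z_n - Q_s z_n\|$, so if I can bound $(z_n^*)_\nnn$ (a bounded sequence of subgradients of a finite convex function at points converging to $\bar z$ — bounded by local boundedness of $\partial g$, e.g.\ \cite{R70}), then $g(z_n) \to 0$ would follow from $z_n - Q_s z_n \to 0$, and continuity of $g$ gives $g(\bar z) = \lim g(z_n) \leq 0$, i.e.\ $\bar z \in \lev{\leq 0}g$.

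The step I expect to be the main obstacle — or at least the one requiring care — is controlling $z_n^* = s(z_n)$: the selection $s$ need not be continuous, so I cannot simply pass to a limit, but I only need \emph{boundedness} of $(z_n^*)_\nnn$, which follows because $(z_n)_\nnn$ is bounded (it converges) and $\partial g$ maps bounded sets to bounded sets since $g$ is finite-valued and convex on all of $X$. An alternative, perhaps cleaner, route avoiding any subgradient bound is to use the Bregman characterization directly: since $Q_s z_n \in H_g(z_n,z_n^*)$ and $z_n - Q_s z_n \to 0$ with $z_n$ bounded, one has $D(Q_s z_n, z_n) \to 0$ as well (using that $D(\cdot,\cdot)\to 0$ on bounded sets when arguments converge together, cf.\ Lemma~\ref{le:16}'s context / Fact~\ref{fact:13}); combined with $D(Q_s z_n, z_n) \leq \scal{\nabla f(z_n) - \nabla f(z_n)}{\cdots}$-type bounds this again pins down the limit. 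I would present the subgradient-boundedness argument as the primary proof since it is the most transparent: bound $(z_n^*)$, derive $g(z_n) \to 0$ from the halfspace inequality, invoke continuity of $g$, and conclude $\bar z \in \lev{\leq 0}g = \Fix Q_s$, which is exactly fixed-point closedness.
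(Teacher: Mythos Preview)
Your proposal is correct and follows essentially the same approach as the paper: use that $Q_s z_n\in H_g(z_n,s(z_n))$ gives $g(z_n)+\scal{s(z_n)}{Q_s z_n-z_n}\leq 0$, invoke local boundedness of $\partial g$ (since $g$ is continuous convex and $(z_n)$ is bounded) to bound $(s(z_n))$, and pass to the limit using $z_n-Q_s z_n\to 0$ and continuity of $g$ to get $g(\bar z)\leq 0$. Your case distinction for $z_n^*=0$ is harmless but unnecessary, since the halfspace inequality already covers it uniformly; the ``alternative route'' you sketch is vague and not needed.
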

\begin{proof}
Let $(x_n)_\nnn$  be a sequence in $X$ such that 
$x_n\to \bar{x}$
and
\begin{equation}
\label{eq:42}
x_n-Q_{s}(x_n) \to 0.
\end{equation}
We must show that $\bar{x}\in \Fix Q_{s}$. 
In view Lemma~\ref{le:41}, it suffices to show that
$g(\bar{x})\leq 0$. 

Set $(\forall\nnn)$ $p_{n}:=Q_{s}(x_n)$. 
For every $\nnn$, by the definition of $Q_{s}(x_n)$,
$p_{n}$ minimizes the function $$y\mapsto D(y,x_{n})=f(y)-f(x_{n})-\scal{\nabla f(x_{n})}{y-x_{n}}$$ 
over the set
$H_{g}(x_{n},s(x_n))
=\menge{x\in X}{g(x_{n})+\scal{s(x_{n})}{x-x_{n}}\leq 0}$, 
where $s(x_{n})\in \partial g(x_{n})$; hence 
\begin{equation}
\label{e:0922c}
g(x_{n})+\scal{s(x_{n})}{p_{n}-x_{n}}\leq 0.
\end{equation}
Since $x_{n}\to \bar{x}$, it follows that $g(x_n)\to g(\bar{x})$
and $(s(x_{n}))_\nnn$ is bounded.
It therefore follows from \eqref{eq:42} and \eqref{e:0922c} that 
$g(\bar{x})\leq 0$, as required.
\end{proof}

\bigskip

Combining Theorem~\ref{th:35}, Lemma~\ref{le:41}, and
Lemma~\ref{le:42},
we obtain the following result.

\begin{theorem}
Let $x_{0}\in X$,
and let $C_0$ be a closed convex subset of $X$ such that
$\lev{\leq 0}g\subseteq C_{0}$. 
Define sequence $(x_n)_\nnn$ and $(C_n)_\nnn$ via 
$$(\forall\nnn) \quad 
C_{n+1}:=C_{n}\cap H(x_{n},Q_{s}x_{n}) \text{ and }
x_{n+1}:= \bproj{C_{n+1}}x_0.
$$
Then $x_n\to \bproj{\lev{\leq 0}g}x_0$ and
$\sum_{\nnn}D(x_{n+1},x_{n})<+\infty$. 
\end{theorem}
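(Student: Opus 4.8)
The plan is to derive the result directly from Theorem~\ref{th:35} by verifying its hypotheses for the mapping $T = Q_s$. First I would observe that by Lemma~\ref{le:41}, the mapping $Q_s$ is QBNE with $\Fix Q_s = \lev{\leq 0} g$, and by Lemma~\ref{le:42} it is fixed-point closed; moreover the standing assumption $\lev{\leq 0} g \neq \varnothing$ guarantees $\Fix Q_s \neq \varnothing$. Since $C_0$ is a closed convex subset of $X$ with $\lev{\leq 0} g = \Fix Q_s \subseteq C_0$, and since the recursions defining $(C_n)_\nnn$ and $(x_n)_\nnn$ are exactly those of Theorem~\ref{th:35} with $T = Q_s$, all hypotheses of that theorem are met.

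Next I would invoke Theorem~\ref{th:35}: exactly one of the three alternatives holds. Because $\Fix Q_s = \lev{\leq 0} g \neq \varnothing$, alternatives \ref{th:35.1} and \ref{th:35.2} (both of which require $\Fix Q_s = \varnothing$) are ruled out. Hence alternative \ref{th:35.0} must hold, which says precisely that the sequence is well defined, $x_n \to \bproj{\Fix Q_s} x_0 = \bproj{\lev{\leq 0} g} x_0$, and $\sum_{\nnn} D(x_{n+1}, x_n) < +\infty$. This is exactly the assertion of the theorem.

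There is essentially no obstacle here: the statement is a corollary obtained by plugging the two lemmas of this subsection into the trichotomy theorem and eliminating the degenerate cases using nonemptiness of the sublevel set. The only thing to be careful about is the bookkeeping of identifications — that the $H(x_n, Q_s x_n)$ appearing in the recursion is the halfspace $H$ of Section~\ref{s:iteration} applied to the pair $(x_n, Q_s x_n)$, and that $\Fix Q_s$ plays the role of the set $C$ in Theorem~\ref{th:35} — but this is immediate from the definitions. One could also remark, for completeness, that $C_0$ may be taken nonempty without loss of generality since it contains the nonempty set $\lev{\leq 0} g$, so the hypothesis "$C_0$ nonempty" of Theorem~\ref{th:35} is automatically satisfied.
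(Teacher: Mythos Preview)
Your proposal is correct and follows exactly the paper's approach: the theorem is stated as an immediate consequence of combining Theorem~\ref{th:35} with Lemma~\ref{le:41} and Lemma~\ref{le:42}, and you have spelled out precisely this combination, including the elimination of the two degenerate alternatives via the standing assumption $\lev{\leq 0} g \neq \varnothing$.
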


We conclude with a few examples of Bregman subgradient projectors
illustrating that this class is quite large.

\begin{example}\label{ex:45}
Suppose that $f=\frac{1}{2}\|\cdot\|^{2}$ and that 
$g$ is differentiable on $X\smallsetminus \lev{\leq 0} g$.
The Bregman subgradient projector (see \eqref{eq:41.0})
then turns into the classical subgradient projector 
\begin{equation}\label{e:subproj}
Q\colon X\to X\colon x\mapsto \begin{cases}
x, &\text{if $g(x)\leq 0$;}\\
x-\frac{g(x)}{\|\nabla g(x)\|^{2}} \nabla g(x), &\text{otherwise.}
\end{cases}
\end{equation}
By Lemma~\ref{le:41} and \ref{le:42},
$Q$ is QBNE and fixed-point closed. 
We single out two special cases:
\begin{enumerate}
\item 
Suppose $g = \tfrac{1}{p}d_C^p$, where
$1\leq p<+\infty$ and 
$(\forall x\in X)$ $d_C(x) := \min_{c\in C}\|x-c\|$.
Then 
\begin{equation}
\label{e:0922d}
Q=\big(1-\tfrac{1}{p}\big)\Id +\tfrac{1}{p}P_{C},
\end{equation}
where $\Id := P_X$. 
Indeed, if $x\not\in C$, then
$\nabla g(x)=d_{C}^{p-2}(x)(x-P_{C}(x))$ 
and \eqref{e:0922d} follows from \eqref{e:subproj}.

\item Suppose $g= e_h$, 
where $h:X\to \RR$ is convex, lower semicontinuous, proper, with
$\lev{\leq 0} h\neq \varnothing$ and where
$e_h$ is the Moreau envelope of $h$, i.e.,
\begin{align}\label{eq:45}
(\forall x\in X)\quad 
e_{h}(x):=\inf_{w\in X} \big(h(w)+\tfrac{1}{2} \|w-x\|^{2}\big).
\end{align}
Then
\begin{equation}\label{eq:46}
Q\colon X\to X\colon x\mapsto \begin{cases}
x, &\text{if $e_{h}(x)\leq 0$;}\\[+2mm]
\displaystyle \frac{e_{h}(x)-2h(P_{h}(x))}{2(e_{h}(x)-h(P_{h}(x)))}x+  \frac{e_{h}(x)}{2(e_{h}(x)-h(P_{h}(x)))}P_{h}(x),
& \text{ otherwise,}
\end{cases}
\end{equation}
where $P_{h}(x):=\mbox{argmin}_{w\in X}(h(w)+\frac{1}{2} \|w-x\|^{2} )$ 
denotes the proximal mapping of $h$.
To see \eqref{eq:46}, we start by observing that 
$\lev{\leq 0} h\subseteq \lev{\leq 0} e_h\neq \varnothing$
because $\lev{\leq 0} h\neq \varnothing$ and $e_{h}\leq h$. 
By e.g.\ \cite[Theorem~2.26]{RW}, 
$P_{h}$ is single-valued and continuous, 
and $e_{h}$ is convex and continuously differentiable with
$\nabla e_{h}=(\Id-P_{h})$.
From \eqref{eq:45} it follows that
\begin{align}\label{eq:47}
 e_{h}(x)=h(P_{h}(x))+\frac{1}{2}\|x-P_{h}(x)\|^{2};
 \end{align}
thus, 
\begin{align}\label{eq:48}
\|x-P_{h}(x)\|^{2}=2(e_{h}(x)-h(P_{h}(x))).
\end{align}
Combining
\eqref{eq:47}, \eqref{eq:48}, and \eqref{e:subproj},
we obtain \eqref{eq:46}.
Note that
$e_{h}(x)\leq 0$ $\Rightarrow$ $h(P_{h}(x))\leq 0$.
\end{enumerate}
\end{example}

\section*{Acknowledgments}

HHB was partially supported by the Natural Sciences and
Engineering Research Council of Canada and by the Canada Research Chair
Program.
JC was partially supported by the Natural Science Foundation of China, the Doctor Fund of Southwest University and the Fundamental Research
Fund for the Central Universities.
XW was partially supported by the Natural
Sciences and Engineering Research Council of Canada.

\bibliographystyle{plain}

\end{document}